\documentclass[a4paper]{amsart}
\usepackage{amsmath, amsfonts,amsthm,amssymb,amscd, verbatim,graphicx,color,multirow,booktabs, caption, mathdots,bm,chngcntr,adjustbox,longtable,mathtools,microtype,booktabs}

\usepackage[top = 78pt, bottom = 72pt, inner=108pt, outer=108pt]{geometry}
\usepackage[dvipsnames, table]{xcolor}
\usepackage{hyperref}
\hypersetup{colorlinks=true,linkcolor=Maroon, citecolor=OliveGreen}

\usepackage[capitalise,noabbrev]{cleveref}
\usepackage[shortlabels]{enumitem}

\baselineskip=13pt

\renewcommand{\P}{\Gamma_{\mathrm{P}}}
\newcommand{\EP}{\Gamma_{\mathrm{EP}}}
\newcommand{\CP}{\mathcal{C}\Gamma_{\mathrm{P}}}
\newcommand{\CEP}{\mathcal{C}\Gamma_{\mathrm{EP}}}

\newcommand{\w}{\mathbf{w}}
\renewcommand{\o}{\mathbf{o}}

\title{Elementary proofs of the diameter bounds for the power graphs}

\author[M.~Barbieri]{Marco Barbieri}
\address{Dipartimento di Matematica ``Felice Casorati", University of Pavia, Via Ferrata 5, 27100 Pavia, Italy.} 
\email{marco.barbieri07@universitadipavia.it}

\author[K.~Rekv\'enyi]{Kamilla Rekv\'enyi} 
\address{Department of Mathematics, University of Manchester,  M13 9PL Manchester, UK. Also affiliated with: Heilbronn Institute for Mathematical Research, BS8 1UG Bristol, UK.}
\email{kamilla.rekvenyi@manchester.ac.uk}

\keywords{power graph, enhanced power graph,  connected, diameter}
\subjclass[2020]{20D60, 05C25}

\newtheorem{theorem}{Theorem}[]
\newtheorem{lemma}[theorem]{Lemma}
\newtheorem{corollary}[theorem]{Corollary}

\theoremstyle{definition}

\newtheorem{remark}[theorem]{Remark}

\begin{document}
\begin{abstract}
    We give a simplified version of the proofs that, outside of their isolated vertices, the complement of the enhanced power graph and of the power graph are connected of diameter at most $3$.
\end{abstract}
\maketitle

\section{Introduction}
Graphs arising from groups have become a fashionable topic in the last fifteen years: \cite{ALCO_2023__6_5_1395_0,SurveyNilpotency,LucchiniDaniele,Rekvenyi}, just to make a few examples.
In this note, we give a simplified proof of the following results.

\begin{theorem}\label{thm:main}
    Let $G$ be a finite group. Then, outside of its isolated vertices, the complement of the enhanced power graph of $G$ is connected, and its diameter is at most $3$.
\end{theorem}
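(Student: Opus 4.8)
The plan is to reformulate adjacency and then lean on a single elementary fact about cyclic groups. In $\CEP$ two distinct vertices $x,y$ are joined exactly when $\langle x,y\rangle$ is non-cyclic, and a vertex is \emph{isolated} exactly when it has no neighbour in $\CEP$. So, after dealing with the trivial cases, it suffices to show that any two non-isolated vertices $x,y$ are joined by a walk of length at most $3$ whose internal vertices are themselves non-isolated. That last clause is automatic: every vertex I will use internally is adjacent to $x$ or to $y$, hence is non-isolated, so the distance I compute in $\CEP$ is also the distance in the induced subgraph on the non-isolated vertices. Connectivity then follows at once from the uniform distance bound.

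For the construction, assume $x\neq y$ and $x\not\sim y$ (otherwise $d(x,y)\le 1$). Since $x$ and $y$ are non-isolated I would pick \emph{any} neighbour $u$ of $x$ and \emph{any} neighbour $v$ of $y$, so that $\langle x,u\rangle$ and $\langle y,v\rangle$ are both non-cyclic, and then split on the single pair $\{u,v\}$. In the first case $u\sim v$, and then $x-u-v-y$ is a walk of length $3$, giving $d(x,y)\le 3$. In the second case $u\not\sim v$, i.e.\ $\langle u,v\rangle$ is cyclic; writing $\langle u,v\rangle=\langle c\rangle$ collapses both witnesses to one generator, since $u,v\in\langle c\rangle$ forces $\langle x,c\rangle\supseteq\langle x,u\rangle$ and $\langle y,c\rangle\supseteq\langle y,v\rangle$ to be non-cyclic. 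Hence $c$ is a common neighbour of $x$ and $y$ and $d(x,y)\le 2$. Either way $d(x,y)\le 3$.

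The engine is the observation that any subgroup of a cyclic group is cyclic, equivalently that a group containing a non-cyclic subgroup is non-cyclic, applied to $\langle x,u\rangle\le\langle x,c\rangle$; the only real idea is to merge the two arbitrarily chosen witnesses into the single generator $c$ when they commute cyclically. I expect no serious obstacle: the argument needs neither a classification of the isolated (dominating) vertices nor any Sylow or primary-decomposition analysis, and it works for \emph{every} choice of $u$ and $v$. The remaining points are pure bookkeeping, namely checking that the internal vertices are legitimate and distinct from the endpoints — $u\neq y$ and $v\neq x$ because $x\not\sim y$, and $c\notin\{x,y\}$ because otherwise one of $\langle x,u\rangle,\langle y,v\rangle$ would be cyclic — and recording that, all internal vertices being non-isolated, the bound survives the passage to the induced subgraph.
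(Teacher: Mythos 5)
Your proof is correct, and it takes a genuinely different --- and more economical --- route than the paper's. The paper anchors everything at an element $g$ of maximal order: it uses the bijection between maximal cliques of $\EP(G)$ and maximal cyclic subgroups to see that $\langle g \rangle$ is a maximum independent set, deduces that $g$ dominates $G - \langle g \rangle$, attaches each non-isolated power $g^\ell$ to a generator $h_\ell$ of a maximal cyclic subgroup avoiding it, and finally rules out distance $4$ between two powers $g^a$, $g^b$ by showing that $h_a \not\sim h_b$ forces $\langle h_a \rangle = \langle h_b \rangle$. Your argument dispenses with the distinguished element entirely (and, incidentally, with finiteness, which the paper invokes only to choose $g$): for arbitrary non-adjacent, non-isolated $x$ and $y$ and \emph{arbitrary} neighbours $u$ of $x$ and $v$ of $y$, either $u \sim v$ and you have a walk of length $3$, or $\langle u, v \rangle = \langle c \rangle$ and the containments $\langle x, u \rangle \le \langle x, c \rangle$ and $\langle y, v \rangle \le \langle y, c \rangle$ force $c$ to be a common neighbour, giving distance $2$. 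Both proofs ultimately rest on the single fact that subgroups of cyclic groups are cyclic, but yours applies it symmetrically and locally, needs no classification of the isolated vertices, and your bookkeeping ($u \ne y$, $v \ne x$, $c \notin \{x, y\}$, all internal vertices non-isolated because they are adjacent to $x$ or $y$) is exactly what is required for the distances to be read off in the induced subgraph on the non-isolated vertices. What the paper's approach buys in exchange is the explicit structural picture --- the maximum independent set $\langle g \rangle$ and the location of the isolated vertices among its powers --- which it then reuses in the proof of the corollary on the power graph; if you adopt your argument, that information has to be recovered separately.
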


\begin{corollary}\label{corollary}
    Let $G$ be a finite group. Then, outside of its isolated vertices, the complement of the power graph of $G$ is connected, and its diameter is at most $3$.
\end{corollary}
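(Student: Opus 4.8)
The plan is to deduce the Corollary from Theorem~\ref{thm:main} by comparing the two complements on the common vertex set $G$. Since $x$ being a power of $y$ forces $\langle x,y\rangle=\langle y\rangle$ to be cyclic, the power graph $\P$ is a subgraph of the enhanced power graph $\EP$, so the complement $\CEP$ is a \emph{spanning subgraph} of $\CP$: every edge of $\CEP$ is an edge of $\CP$, whence $d_{\CP}(x,y)\le d_{\CEP}(x,y)$ whenever the latter is defined. The same inclusion shows that a vertex dominating $\P$ also dominates $\EP$, so the set $I_{\mathrm P}$ of isolated vertices of $\CP$ is contained in the set $I_{\mathrm{EP}}$ of isolated vertices of $\CEP$. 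This immediately disposes of the pairs already covered by the Theorem: if $x,y\notin I_{\mathrm{EP}}$, then Theorem~\ref{thm:main} gives a path of length at most $3$ in $\CEP$, which is also a path in $\CP$. The whole difficulty is thus concentrated on the extra non-isolated vertices $I_{\mathrm{EP}}\setminus I_{\mathrm P}$, i.e.\ elements that dominate $\EP$ but not $\P$.

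To understand these, I would identify $I_{\mathrm{EP}}$ with $Z:=\bigcap_{M}M$, the intersection of all maximal cyclic subgroups of $G$: an element dominates $\EP$ exactly when it lies in every maximal cyclic subgroup, since each element lies in some maximal cyclic subgroup. In particular $Z$ is itself cyclic, and for any cyclic subgroup $C\le G$ the subgraph of $\CP$ induced on $C$ is precisely the complement of the power graph of the cyclic group $C$. This reduces the remaining analysis to a single number-theoretic statement about $\mathbb{Z}/n$, which in the extreme case $Z=G$ (that is, $G$ cyclic) is the \emph{entire} content, because there the Theorem is vacuous.

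The key lemma I would prove is the cyclic case via a \emph{dominating edge}. Writing $n=p^{e}m$ with $p\nmid m$ and $m>1$ (possible exactly when $n$ is not a prime power), I claim the elements of order $p^{e}$ and of order $m$ form a dominating edge of the complement power graph of $\mathbb{Z}/n$: their orders are incomparable, so they are adjacent in the complement, and an elementary divisibility check shows that every $d$ with $1<d<n$ is incomparable either to $p^{e}$ or to $m$ (the four cases $d\mid p^{e}$, $p^{e}\mid d$ versus $d\mid m$, $m\mid d$ all collapse to $d\in\{1,n\}$). Since a dominating edge forces diameter at most $3$, this settles the purely cyclic case.

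The main obstacle, and where I expect the real work, is the \emph{gluing}: keeping the global distance bound at $3$ when the cyclic part inside $Z$ is attached to the component furnished by the Theorem. The danger is that a vertex of $I_{\mathrm{EP}}\setminus I_{\mathrm P}$ might reach the rest of $\CP$ only through a length-$2$ detour, inflating the diameter beyond $3$. I would resolve this by choosing the connectors as in the cyclic lemma, built from a maximal cyclic subgroup containing the offending vertex, and arranging that a single incomparable pair simultaneously dominates both the $Z$-part and the vertices outside $Z$; verifying that such a simultaneous dominating pair can always be selected is the crux of the argument.
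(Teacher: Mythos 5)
Your setup is sound: $\CEP(G)$ is a spanning subgraph of $\CP(G)$, the isolated vertices of $\CEP(G)$ are exactly $Z=\bigcap_M M$, the induced subgraph of $\CP(G)$ on a cyclic subgroup $C$ is $\CP(C)$, and your dominating-edge lemma for $\CP(C_n)$ (elements of orders $p^e$ and $m=n/p^e$ dominate everything of order $d\notin\{1,n\}$) is correct. But the proof is incomplete precisely where you say the real work lies: you never establish that a \emph{simultaneous} dominating pair can be selected, and this is not a routine verification. Concretely, take $w\in Z$ non-isolated in $\CP(G)$ and build your pair $\{u,v\}$ inside a maximal cyclic subgroup $M$ containing $w$ and a $\CP$-neighbour of $w$. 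One of $u,v$ may itself lie in $Z$ (only the case $u,v\in Z$ is excluded, since $\langle u,v\rangle=M$ would force $G$ cyclic); a vertex of $Z$ is a power of every generator of every maximal cyclic subgroup, hence is non-adjacent in $\CP(G)$ to all such generators, so that endpoint dominates very little outside $M$. Even for the endpoint outside $Z$, \cref{thm:main} only bounds its $\CEP$-distance (hence $\CP$-distance) to other non-isolated vertices of $\CEP(G)$ by $3$, which after prepending the edge from $w$ yields $4$, not $3$. So as written the argument proves connectedness (and diameter at most $4$ or so), but not the sharp bound, and it is not clear the dominating-pair strategy can be pushed through without substantial extra structure theory.

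For comparison, the paper takes a different and shorter route that avoids any global domination claim. For connectedness it argues directly about a hypothetical $g^\ell\in Z$ whose whole $\CP$-neighbourhood lies in $\langle g\rangle$: if two primes divide $|G|$, then $g^\ell$ is a power of both a $p$-element and an $r$-element, forcing $g^\ell=1$; otherwise $G$ is a $p$-group and $\CP(\langle g\rangle)$ is edgeless by \cref{remark}, so $g^\ell$ is isolated. For the diameter it takes a minimal path $g^a\sim x\sim y\sim z\sim g^b$ of length $4$ and observes that minimality forces $x$ and $z$ to be non-adjacent in $\CP(G)$, i.e.\ $\langle x\rangle\le\langle z\rangle$ (up to swapping), whence $g^b\notin\langle x\rangle$ and a shortcut $g^b\sim x$ appears. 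If you want to salvage your approach, you would need to replace the dominating-pair claim by an argument of this local, path-contradiction type; otherwise the gap at the gluing step is fatal to the diameter bound.
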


The problem of the connectedness of the complement of the enhanced power graph has been posed in \cite[Question~20]{MR4346241}. We are aware of two distinct solutions of this problem: \cite[Proposition~3.2]{doi:10.1080/00927870701302081}\footnote{We would also like to point out that \cite{doi:10.1080/00927870701302081} predates \cite{MR4346241}, but their answer has remained unnoticed because their name for the complement of the enhanced power graph is different.} and \cite[Theorem~1.6]{MDW}, both also showing that the unique connected component of such graph has diameter at most $3$. Our proof consists in identifying the largest independent set of in the complement of an enhanced power graph, and in exploiting its properties to show connectedness and bounded diameter.

The connectedness of the complement of the power graph has been proved in \cite[Theorem~9.9]{MR4346241}, while \cite[Question~19]{MR4346241} asks whether or not its diameter is bounded, and an answer in the affirmative has been given in \cite[Theorem~1.5]{MDW}. The novelty of our proof of \cref{corollary} consists of the fact of being an easy consequence of \cref{thm:main}.

We conclude this introduction by pointing out that both bounds in \cref{thm:main,corollary} are sharp. Indeed, for two distinct primes $p$ and $r$, the complement of the power graph and of the enhanced power graph of $C_{pr} \times C_{p^2}$ have diameter $3$ (see~\cite[Lemma~2.6 and Lemma~3.3]{MDW}).

\section{Proof of \cref{thm:main}}
Let $G$ be a finite group. The \emph{enhanced power graph of $G$} is defined as the (undirected) graph $\EP(G)$ of vertex-set $G$ where two vertices are declared adjacent if they are contained in a common cyclic subgroup of $G$. 
We are interested in its complement, which we denote by $\CEP(G)$.

\begin{proof}[Proof of \cref{thm:main}]
Let $G$ be a finite group, and let $g\in G$ be an element of maximal order.\footnote{In choosing $g$, we have already used the finiteness of $G$.} Observe that cliques in $\EP(G)$ are in one-to-one correspondence with maximal cyclic subgroups of $G$ (see~\cite[Proposition~2.4]{MR4346241}). It follows that independent sets in $\CEP(G)$ are in one-to-one correspondence with maximal cyclic subgroups of $G$. In particular, $\langle g \rangle$ is an independent set of maximal size in $\CEP(G)$, and hence, all the elements $h\in G$ which are not powers of $G$ are adjacent to $g$.

Let $g^\ell$ be a power of $g$, and suppose that $g^\ell$ is not an isolated vertex (that is, it is not contained in every maximal cyclic subgroup of $G$). Then, there exists a maximal cyclic subgroup $C\le G$ such that $g^\ell \notin C$. If we denote by $h_\ell$ a generator of $C$, $g^\ell$ and $h_\ell$ are adjacent. Therefore, $g^\ell$ is part of the connected component that contains all the $h\in G$ which are not powers of $g$. This proves that $\CEP(G)$, outside of its isolated vertices, has exactly one connected component.

We now focus on the upper bound on the diameter of the connected component of $\CEP(G)$. Observe that, our proof already shows that the diameter is bounded from above by $4$, and the vertices that might meet the maximal distance are distinct powers of $g$. Aiming for a contradiction, we suppose that, for some integers $a$ and $b$, $g^a$ and $g^b$ are at distance $4$. We suppose that
\[ g^a \sim h_a \sim g \sim h_b \sim g^b \]
is a path of minimal distance connecting $g^a$ and $g^b$.\footnote{We point out that, for $i\in \{a,b\}$, $h_i$ is the vertex adjacent to $g^i$ built in the previous paragraph.} By minimality of this path, $h_a$ and $h_b$ are not adjacent. By the definition of $\CEP(G)$, $\langle h_a, h_b \rangle$ is a cyclic subgroup of $G$, and, for $i\in \{a,b\}$, by construction, $h_i$ generates a maximal cyclic subgroup of $G$. Therefore,
\[ \langle h_a, h_b \rangle = \langle h_a \rangle =\langle h_b \rangle \,.\]
It follows that $g^a$ is adjacent to $h_b$, and $g^b$ is also adjacent to $h_a$. Hence, $g^a$ and $g^b$ are at distance $2$, a contradiction. This proves that the diameter of $\CEP(G)$ is at most $3$.
\end{proof}

\section{Proof of \cref{corollary}}
The adjective \emph{enhanced} suggest that there exist an original version of the \emph{power graph}. Indeed, for a finite group $G$, the power graph $\P(G)$ is defined as the graph on the vertex-set $G$ such that two elements are declared adjacent if one is a power of the other. We denote its complement by $\CP(G)$.

\begin{remark}[\cite{CGS}, Theorem~2.12]\label{remark}
    For any prime $p$, and for any positive integer $m$, $\P(C_{p^m})$ is complete.\footnote{The converse is also true: if $\P(G)$ is complete, then $G$ is a cyclic $p$-group.} Indeed, this is follows from the fact that the series
    \[ C_{p^m} \ge C_{p^m}^{\phantom{p^m}p} \ge \dots \ge C_{p^m}^{\phantom{p^m}p^{m-1}} \ge 1 \,. \]
    contains all the subgroups of $C_{p^m}$. Hence, $\CP(C_{p^m})$ contains no edges.
\end{remark}

\begin{proof}[Proof of \cref{corollary}]
By construction, $\P(G)$ is a subgraph of $\EP(G)$, and hence the complement of the power graph $\CP(G)$ contains $\CEP(G)$ as a subgraph. We need to focus our attention on those isolated vertices of $\CEP(G)$ that have neighbours in $\CP(G)$. In the proof of \cref{thm:main}, we have shown that, all isolated vertices in $\CEP(G)$ are powers of an element of maximal order $g$.

Aiming for a contradiction, we suppose that there is an integer $\ell$ such that $g^\ell$ is contained in a connected component of $\CP(G)$ that does not contain $g$. It follows that $g^\ell$ is not adjacent to any $h \in G - \langle g \rangle$. In particular, $g^\ell$ is a power of every element of the group that is not a power of $g$. If two distinct primes divide the order of $G$, say $p$ and $r$, then $g^\ell$ lies both in a Sylow $p$-subgroup and a Sylow $r$-subgroup, because $g^\ell$ is a power of both a $p$-element and an $r$-element. The only element with this property is the identity, and thus $g^\ell=1$ is an isolated vertex. Therefore, $G$ is a $p$-group, for a suitable prime $p$, and the neighbourhood of $g^\ell$ is contained in $\langle g \rangle$. Since $g$ is a $p$-element, by \cref{remark}, $\CP(\langle g \rangle)$ has no edges. Hence, $g^\ell$ is an isolated vertex, a contradiction. This shows that $\CP(G)$, outside of its isolated vertices, has a single connected component.

We now shift our focus on the diameter of $\CP(G)$. Since $\CP(G)$ is a subgraph of $\CEP(G)$, \cref{thm:main} implies that, if two vertices are at distance $4$ in $\CP(G)$, then they are distinct powers of $g$. Aiming for a contradiction, suppose that, for two suitable integers $a$ and $b$, $g^a$ and $g^b$ are at distance $4$ in $\CP(G)$. Let
\[ g^a \sim x \sim y \sim z \sim g^b\]
be a path of minimal distance connecting $g^a$ and $g^b$. By definition of $\CP(G)$ and by the minimality of the path, we have that
\[ g^a \notin \langle x \rangle \,, \quad g^b \notin \langle z \rangle\,, \quad x \in \langle z  \rangle \,.\footnote{It might happen that $z \in \langle x \rangle$ is true instead: in this case, we just swap $a$ with $b$ and $x$ with $z$.}\]
We obtain that $\langle x \rangle$ is a subgroup of $\langle z \rangle$. Hence, $g^b$ cannot be an element of $\langle x \rangle$, and $g^b$ and $x$ are adjacent, against the minimality of the chosen path.\footnote{We observe that this proof actually never uses the assumption that $g^a$ and $g^b$ are powers of $g$.} This final contradiction completes our proof and our note.
\end{proof}

\bibliographystyle{plain}
\bibliography{refs.bib}
\end{document}